\newcommand{\vp}{\varphi}
\newcommand{\iqt}{\int_{Q_T}}
\newcommand{\wnr}{\|w\|_{r,Q_T}}
\newcommand{\lz}{L_N}
\newcommand{\mq}{M_q}
\def\XXint#1#2#3{{\setbox0=\hbox{$#1{#2#3}{\int}$ }
		\vcenter{\hbox{$#2#3$ }}\kern-.6\wd0}}
 \numberwithin{equation}{section}
\newcommand{\ra}{\rightarrow}
\newcommand{\ve}{\varepsilon}
\newcommand{\pt}{\partial_t}
\newcommand{\RN}{\mathbb{R}^N}
\newcommand{\irn}{{\int_{\RN}}}
\newcommand{\fr}{f(r)}
\newcommand{\fp}{f^{\prime}(r)}
\newcommand{\rn}{\mathbb{R}^N}
\newcommand{\irnt}{\int_{ Q_T}}
\newcommand{\vo}{v^{(0)}}
\newcommand{\uo}{u^{(0)}}
\newcommand{\bo}{b^{(0)}}
\newcommand{\pj}{\partial_{x_j}}
\newtheorem{theorem}{Theorem}[section]
\newtheorem{lemma}[theorem]{Lemma}
\newtheorem{clm}[theorem]{Claim}
\theoremstyle{definition}
\newtheorem{remark}{Remark}[section]
\title[magnetohydrodynamic equations with large initial data
] 
      { Global regularity for solutions of magnetohydrodynamic equations with large initial data}
\author[Xiangsheng Xu]{}
\subjclass{Primary: : 76W05, 76D03, 35Q35.}
 \keywords{  Magnetohydrodynamic equations, partial dissipation, fractional
 	dissipation, global regularity, De Giorgi iteration scheme; scaling of variables.}
 \email{xxu@math.msstate.edu}
\begin{document}

\maketitle

\centerline{\scshape Xiangsheng Xu}
\medskip
{\footnotesize
 \centerline{Department of Mathematics \& Statistics}
   \centerline{Mississippi State University}
   \centerline{ Mississippi State, MS 39762, USA}
} 

	\begin{abstract}We study the regularity properties of solutions to the initial value problem for the magnetohydrodynamic equations in $\mathbb{R}^N, N\geq 3$. We obtain a global in-time strong solution without any smallness assumptions on the initial data. 

\end{abstract}
\bigskip


\section{Introduction}
 Magneto-hydrodynamics (MHD) couples Maxwell’s equations
of electromagnetism with hydrodynamics to describe the
macroscopic behavior of conducting fluids such as plasmas. It plays a very important role in solar physics, astrophysics, space
plasma physics, and in laboratory plasma experiments. The initial value problem for these equations reads
\begin{eqnarray}
	\pt u+(u\cdot\nabla) u-(b\cdot\nabla) b+\nabla p&=&\nu\Delta u\ \ \mbox{in $\rn\times(0,T)\equiv Q_T$},\label{nsf1}\\
	\pt b+(u\cdot\nabla) b-(b\cdot\nabla) u&=&\eta\Delta b\ \ \mbox{in $ Q_T$},\label{nsf2}\\
	\nabla\cdot u&=&	\nabla\cdot b=0\ \ \mbox{in $ Q_T$},\label{nsf4}\\
	u(x,0)&=& u^{(0)}(x),\ \ b(x,0)=b^{(0)}(x) \ \ \mbox{on $\rn$}\label{nsf5},
\end{eqnarray}
where $u$ denotes the velocity field, $b$ the magnetic field, $p$ the pressure, $\nu > 0$ the
kinematic viscosity, and $\eta >0$ the magnetic diffusivity. We refer the reader to \cite{DA} for a rather comprehensive survey on the subject of MHD. 

 The MHD equations offer an interesting interplay between the Navier-Stokes equations of hydrodynamics and electromagnetism. Mathematical analysis of their rich solution properties has always attracted a lot of attention. However, we will not attempt to offer a review of exiting results. For that we refer the reader to \cite{WZ}. 
One of the fundamental problems concerning the MHD equations is whether physically relevant regular solutions remain smooth for all time or they develop  singularities in finite
time. This problem is extremely difficult even in the 2D case \cite{WU1}. It has remained elusive in spite of many partial results. See \cite{DL,JJ,ZZ1,ZZ,Y} and the references therein.
The objective of this paper is to settle this open problem. More precisely, we have
\begin{theorem}\label{thm}Assume that
	\begin{eqnarray}
		|\uo| &\in &L^{\infty}(\rn)\cap  L^{2}(\rn)\ \ \mbox{with}\ \ \nabla\cdot\uo=0\ \mbox{and}\nonumber\\
	|\bo|&\in &L^{\infty}(\rn)\cap  L^{2}(\rn)\ \ \mbox{with $\nabla\cdot\bo=0$}.\nonumber
	\end{eqnarray}
	Let $(u, b)$ be a local-in-time strong solution to \eqref{nsf1}-\eqref{nsf5}. Define
	\begin{equation}
		w=|u|^2+|b|^2. 	\nonumber
	\end{equation}
	Then for each $\lz\geq 1$ there exist two positive numbers $c=c\left(N,\nu,\eta, \lz\right)$ and $s_5=s_5(N, \lz)$ such that
	\begin{eqnarray}\label{es}
		\|w\|_{\infty,Q_T}&\leq& 32\|w(\cdot,0)\|_{\infty,\rn}+c\|w\|_{\lz,Q_T}^{s_5}.
	\end{eqnarray}
\end{theorem}
The notion of a strong solution here  is adopted from \cite{OP}.  
It is understood to be a weak solution with the additional property
\begin{eqnarray}
w\in L^\infty(Q_T).\nonumber
\end{eqnarray}	
A bootstrap argument immediately yields high regularity for $(u, b)$. 
In fact, a strong solution can be shown to be a smooth one, thereby satisfying system \eqref{nsf1}-\eqref{nsf5} in the classical sense \cite{OP}. We will not pursue the details here. The local existence of a strong solution is largely known. For example, one can establish this by adopting the method in \cite{OP}. Later, we will see that there is a positive number
$c=c(N, \nu, \eta)$ such that
\begin{equation}\label{wi1}
	\|w\|_{\frac{N+2}{N}, Q_T}\leq c\left(\|\uo\|_{2,\rn}^2+\|\bo\|_{2,\rn}^2\right).
\end{equation}
This combined with Theorem \ref{thm} implies that a local-in-time strong solution never develops singularity. As a result, it can be extended as a global strong solution. If $b\equiv 0$, Theorem \ref{thm} implies a positive answer to the Navier-Stokes millennium problem \cite{F,X1}.

To describe our approach, let us first recall the classical regularity theorem for the initial value problem for linear parabolic equation of the form
\begin{eqnarray*}
	\pt v-\Delta v&=&\partial_{x_i}f_i(x,t)\ \ \mbox{in $Q_T$},\\
	v(x,0)&=&\vo\ \ \mbox{on $\rn$}.
\end{eqnarray*}
	Here we have employed the notation convention of summing over repeated indices. It asserts that for each $q>N+2$ there is a positive number $c=c(N,q)$ such that
\begin{eqnarray}\label{int1}
	\|v\|_{\infty, Q_T}\leq \|\vo\|_{\infty,\rn}+c\sum_{i=1}^{N}\|f_i\|_{q,Q_T}.
\end{eqnarray}
In the incompressible Navier-Stokes equations, which are \eqref{nsf1} and \eqref{nsf4} with $b\equiv0$, $f_i$ is roughly $v_i\sum_{j=1}^{N}v_j+p$. As a result, \eqref{int1} becomes
\begin{eqnarray}\label{int2}
	\|v\|_{\infty, Q_T}\leq \|\vo\|_{\infty,\rn}+c\left(\|v\|_{2q,Q_T}^2+	\|p\|_{q,Q_T}\right)\leq \|\vo\|_{\infty,\rn}+c\|v\|_{2q,Q_T}^2	.
\end{eqnarray}
The last step is due to \eqref{use1} below.
Thus, to be able to apply the classical theorem here, one must  have $v\in L^{2q}(Q_T)$ for some $q>N+2$. Of course, this integrability requirement can be weakened due to the work of Serrin, Prodi, and Ladyzenskaja \cite{OP}.  In spite of that, the gap between what is needed and what is available in terms of integrability is still substantial because we only have 
\begin{eqnarray*}
	v\in L^{\frac{2(N+2)}{N}}(Q_T)
\end{eqnarray*}
for a weak solution $v$ to the Navier-Stokes equations. On the other hand, for $\ell>q$ the interpolation inequality asserts
\begin{eqnarray*}
	\|v\|_{2q,Q_T}\leq \|v\|_{2\ell,Q_T}^{\frac{\ell(Nq-N-2)}{q(N\ell-N-2)}}\|v\|_{\frac{2(N+2)}{N},Q_T}^{\frac{(N+2)(\ell-q)}{q(N\ell-N-2)}}.
\end{eqnarray*} 
Insert this into \eqref{int2} to get
\begin{eqnarray}
		\|v\|_{\infty, Q_T}\leq \|\vo\|_{\infty,\rn}+c\|v\|_{2\ell,Q_T}^{\frac{2\ell(Nq-N-2)}{q(N\ell-N-2)}}\|v\|_{\frac{2(N+2)}{N},Q_T}^{\frac{2(N+2)(\ell-q)}{q(N\ell-N-2)}}	.\nonumber
\end{eqnarray}
This indicates that if we increase $q$  we can make the exponent of the corresponding $L^{2q}$ norm  decreases. Theorem \ref{thm} implies that we can also go in the opposite direction. That is, by increasing the power of the $L^{2q}$ norm  we can make $q$ smaller. 
How to achieve this is based upon three ingredients: scaling of the (modified) equations by a suitable $L^r$-norm, the interpolation inequality for the $L^r$-norms, and a carefully-designed iteration scheme of the  De Giorgi type. These techniques enable us to introduce several parameters. The trick is to find the right combination of these parameters. That is, instead of establishing a priori estimates through test functions, we are doing so here via suitable parameters. Our method seems to be very effective in dealing with the type of nonlinearity appearing in Navier-Stokes equations \cite{X1,X2}.   
 
Throughout this paper the following two inequalities will be used without acknowledgment:
\begin{eqnarray*}
	(|a|+|b|)^\gamma&\leq&\left\{\begin{array}{ll}
		2^{\gamma-1}(|a|^\gamma+|b|^\gamma)&\mbox{if $\gamma\geq 1$},\\
		|a|^\gamma+|b|^\gamma&\mbox{if $\gamma\leq 1$}.
	\end{array}\right.
\end{eqnarray*}
We also need Young's inequality 
\begin{equation}\label{you}
	|ab|\leq \ve|a|^p+\ve^{-\frac{q}{p}}|b|^q, 
\end{equation}
where $\ve, p, q\in(0, \infty)$ with $\frac{1}{p}+\frac{1}{q}=1$.

\begin{lemma}\label{ynb}
	Let $\{y_n\}, n=0,1,2,\cdots$, be a sequence of positive numbers satisfying the recursive inequalities
	\begin{equation*}
		y_{n+1}\leq cd^ny_n^{1+\alpha}\ \ \mbox{for some $d>1$ and $ c, \alpha\in (0,\infty)$.}
	\end{equation*}
	If
	\begin{equation*}
		y_0\leq c^{-\frac{1}{\alpha}}d^{-\frac{1}{\alpha^2}},
	\end{equation*}
	then $\lim_{n\rightarrow\infty}y_n=0$.
\end{lemma}
A De Giorgi iteration scheme relies upon this lemma, whose proof can be found in (\cite{D}, p.12).
The remainder of this paper is devoted to the proof of Theorem \ref{thm}.
\section{Proof of Theorem \ref{thm}}

As observed in \cite{WU1},  the
standard approach to the global regularity problem for the incompressible MHD
equations is to divide the process into two steps. The first step is to show the local
existence of a regular solution. In our case, this can be done via the contraction mapping principle
and its variants such as successive approximations \cite{OP}. 
The second step is to extend the local solution of the first step into a global (in time)
one by establishing suitable global a priori bounds on the solutions. That is, one
needs to obtain a priori estimates whose upper bounds are finite for each fixed $T$.
Once we have enough these so-called global bounds, the standard
Picard type extension theorem allows us to extend the local solution into a global
one. Therefore, the global regularity problem on the MHD equations boils down
to the global a priori bounds for local smooth solutions. Thus, in our subsequent calculations we can assume that our solutions are local-in-time strong ones. The key here is that our upper bounds are independent of $T$. This implies that our local-in-time strong solutions never develop singularity. As a result, they can be extended as global-in-time strong solutions. 

The free energy associated with \eqref{nsf1}-\eqref{nsf5} is well known. For completeness, we reproduce it in the following lemma.
\begin{lemma} There holds
	\begin{equation}
		\sup_{0\leq t\leq T}\irn\left(|u|^2+|b|^2\right)dx+\irnt\left(|\nabla u|^2+|\nabla b|^2\right)dxdt\leq c(\nu,\eta)\irn\left(|\uo|^2+|\bo|^2\right)dx.\label{fen}
	\end{equation}
\end{lemma}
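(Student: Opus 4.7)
The plan is to derive \eqref{fen} via the classical $L^2$-energy identity for MHD: test \eqref{nsf1} with $u$ and \eqref{nsf2} with $b$, integrate over $\rn$, add the two identities, and exploit the divergence-free conditions \eqref{nsf4} to discard the pressure and transport terms and to cancel the remaining magnetic cross terms. All manipulations are justified by the standing assumption earlier in this section that our a priori solution is sufficiently regular with adequate decay at spatial infinity, so that integration by parts on $\rn$ produces no boundary contributions.

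For the velocity equation, testing \eqref{nsf1} with $u$ and integrating by parts, the pressure term drops out since $\irn\nabla p\cdot u\,dx=-\irn p(\nabla\cdot u)dx=0$, the self-convection term vanishes since $\irn(u\cdot\nabla u)\cdot u\,dx=\frac{1}{2}\irn u\cdot\nabla|u|^2\,dx=0$, and the Laplacian yields $-\nu\irn|\nabla u|^2\,dx$. Altogether,
\begin{equation*}
	\frac{1}{2}\frac{d}{dt}\irn|u|^2dx+\nu\irn|\nabla u|^2dx=\irn(b\cdot\nabla b)\cdot u\,dx.
\end{equation*}
The analogous computation for \eqref{nsf2} tested against $b$, in which the $u$-transport term $\irn(u\cdot\nabla b)\cdot b\,dx$ now vanishes because $\nabla\cdot u=0$, gives
\begin{equation*}
	\frac{1}{2}\frac{d}{dt}\irn|b|^2dx+\eta\irn|\nabla b|^2dx=\irn(b\cdot\nabla u)\cdot b\,dx.
\end{equation*}

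The crucial step is to observe that the two right-hand sides sum to zero: in index notation,
\begin{equation*}
	\irn(b\cdot\nabla b)\cdot u\,dx+\irn(b\cdot\nabla u)\cdot b\,dx=\irn b_j\partial_j(u_ib_i)dx=-\irn(\nabla\cdot b)(u\cdot b)dx=0
\end{equation*}
by $\nabla\cdot b=0$. Adding the two identities thus produces the energy equality
\begin{equation*}
	\frac{1}{2}\frac{d}{dt}\irn\left(|u|^2+|b|^2\right)dx+\nu\irn|\nabla u|^2dx+\eta\irn|\nabla b|^2dx=0,
\end{equation*}
and integrating in time over $[0,\tau]$ for $\tau\in[0,T]$ followed by taking the supremum in $\tau$ yields \eqref{fen}, with a constant depending only on $\nu$ and $\eta$. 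There is no substantive obstacle here: the argument is a short algebraic consequence of the divergence-free constraints, and the only technical care needed is in the integration by parts on all of $\rn$, which is harmless under the smoothness and decay we have assumed.
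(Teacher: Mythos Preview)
Your proof is correct and follows essentially the same route as the paper: both test \eqref{nsf1} with $u$ and \eqref{nsf2} with $b$, use the divergence-free conditions to kill the pressure and self-transport terms, and exploit $\nabla\cdot b=0$ to cancel the magnetic cross terms before integrating in time. The only cosmetic difference is that the paper first rewrites $\irn(b\cdot\nabla b)\cdot u\,dx=-\irn(b\cdot\nabla u)\cdot b\,dx$ via the identity $b\cdot\nabla b\cdot u=\nabla\cdot(b\otimes b\,u)-b\cdot\nabla u\cdot b$, so that the cancellation is already visible before summing, whereas you sum first and then cancel; the underlying computation is identical.
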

\begin{proof} We easily verify that
	\begin{eqnarray}
		\pt u\cdot u&=&\frac{1}{2}\pt|u|^2,\label{am1}\\
		(u\cdot\nabla) u\cdot u&=&u_j\pj u_iu_i=\frac{1}{2}(u\cdot\nabla)|u|^2=\frac{1}{2}u\cdot\nabla|u|^2,\nonumber\\
		\Delta u\cdot u&=&\frac{1}{2}\Delta |u|^2-|\nabla u|^2.\nonumber
	\end{eqnarray}
 Equipped with \eqref{nsf4}, we further have that
	\begin{eqnarray}
		\irn b\cdot\nabla \xi dx&=&		\irn u\cdot\nabla \xi dx=0\ \ \mbox{for each $\xi\in W^{1,2}(\rn)$},\label{am6}\\
		(b\cdot\nabla) b\cdot u&=&b_j\pj b_iu_i=\nabla\cdot\left(b\otimes b u\right)-(b\cdot\nabla) u\cdot b,\label{am4}\\
		\nabla p\cdot u&=&\nabla\cdot (pu),\nonumber
	\end{eqnarray}
	where
	\begin{equation}
		b\otimes b=bb^T,\nonumber
	\end{equation}
	With these in mind, we use $u$ as a test function in \eqref{nsf1} to drive
	\begin{eqnarray}
		\frac{1}{2}\frac{d}{dt}\irn|u|^2dx+\nu\irn|\nabla u|^2dx=-\irn (b\cdot\nabla) u\cdot b\ dx.\label{ueq3}
	\end{eqnarray}
Similarly, by using $b$ as a test function in \eqref{nsf2}, we arrive at 
	\begin{eqnarray*}
		\frac{1}{2}\frac{d}{dt}\irn|b|^2dx+\eta\irn |\nabla b|^2dx=\irn (b\cdot\nabla) u\cdot b\ dx.
	\end{eqnarray*}
	Add this to \eqref{ueq3} to obtain
	\begin{eqnarray*}
		\frac{1}{2}\frac{d}{dt}\irn\left(|u|^2+|b|^2\right)dx+\nu\irn|\nabla u|^2dx+\eta\irn |\nabla b|^2dx=0.
	\end{eqnarray*}
	An integration with respect to $t$ yields the lemma.
\end{proof}

Recall that the Sobolev inequality in the whole space asserts
\begin{equation}
	\|f\|_{\frac{2N}{N-2},\rn}\leq c(N)\|\nabla f\|_{2,\rn}\ \ \mbox{for each $f\in H^1(\rn)$}.\label{sob}
\end{equation}
This together with \eqref{fen} implies
\begin{eqnarray}
\irnt|u|^{\frac{4}{N}+2}dxdt	&\leq&\int_{0}^{T}\left(\irn |u|^2dx\right)^{\frac{2}{N}}\left(\irn|u|^{\frac{2N}{N-2}}dx\right)^{\frac{N-2}{N}}dt\nonumber\\
	&\leq&\sup_{0\leq t\leq T}\left(\irn |u|^2dx\right)^{\frac{2}{N}}\int_{0}^{T}\left(\irn|u|^{\frac{2N}{N-2}}dx\right)^{\frac{N-2}{N}}dt\nonumber\\
	&\leq&c(N)\sup_{0\leq t\leq T}\left(\irn |u|^2dx\right)^{\frac{2}{N}}\irnt|\nabla u|^2dxdt\nonumber\\
	&\leq& c\left[\irn\left(|\uo|^2+|\bo|^2\right)dx\right]^{1+\frac{2}{N}}.\nonumber
\end{eqnarray}
By the same token, we also have
\begin{equation}
	\irnt|b|^{\frac{4}{N}+2}dxdt\leq c\left[\irn\left(|\uo|^2+|\bo|^2\right)dx\right]^{1+\frac{2}{N}}.\nonumber
\end{equation}
Combing the preceding two estimates yields \eqref{wi1}.

\begin{proof}[Proof of Theorem \ref{thm}]
	 Set
	\begin{equation}\label{ard}
		A_r=\|w\|_{r,Q_T}, \ \ r>1. 	
	\end{equation}
	Consider the functions
	\begin{equation}\label{aw1}
		\vp=\frac{|u|^2}{A_r},\ \ \psi=\frac{|b|^2}{A_r}.
	\end{equation}
	We proceed to derive the differential inequalities satisfied by these two functions. To this end, 
	we take the dot product of \eqref{nsf1} with $u$ to obtain
	\begin{equation*}
		\pt u\cdot u+(u\cdot\nabla) u\cdot u-(b\cdot\nabla) b\cdot u+\nabla p\cdot u=\nu\Delta u\cdot u.
	\end{equation*}
	Incorporating \eqref{am1}-\eqref{am4} into the equation, we arrive at
	\begin{equation}
		\pt|u|^2+u\cdot\nabla|u|^2+2\nu|\nabla u|^2-\nu\Delta|u|^2=2\nabla\cdot(b\otimes bu)-2(b\cdot\nabla) u\cdot b-2\nabla\cdot(pu).\nonumber
	\end{equation}
	Divide through the equation by $A_r$ to obtain
	\begin{equation}\label{use}
		\pt\vp+u\cdot\nabla\vp+2A_r^{-1}\nu|\nabla u|^2-\nu\Delta\vp=2A_r^{-1}\nabla\cdot(b\otimes bu)-2A_r^{-1}(b\cdot\nabla) u\cdot b-2A_r^{-1}\nabla\cdot(pu).
	\end{equation}
	Similarly, take the dot product of \eqref{nsf2} with b to get
	\begin{equation}
		\pt \psi+u\cdot\nabla\psi+2A_r^{-1}\eta|\nabla b|^2-\eta\Delta\psi=2A_r^{-1}\nabla\cdot(b\otimes bu)-2A_r^{-1}(b\cdot\nabla) b\cdot u.\nonumber
	\end{equation}
	
	Next, we show that we can apply  a De Giorgi iteration scheme to \eqref{use}. 
This will be done in a way similar to \cite{X1}. 
	Select 
	\begin{equation}\label{kcon1}
		k\geq 2\|\vp(\cdot,0)+\psi(\cdot,0)\|_{\infty,\rn}
	\end{equation}
	as below.	Define
	\begin{eqnarray*}
		k_n&=&k-\frac{k}{2^{n+1}} \ \ \mbox{for $n=0,1,\cdots$.} 
	\end{eqnarray*}
	Obviously, 
	\begin{equation}
		\mbox{$\{k_n\}$ is increasing and $\frac{k}{2}\leq k_n\leq k$ for each $n$.}\nonumber
	\end{equation} 
	Fix 
	\begin{equation}
		\beta>1.\nonumber
	\end{equation}
	We can easily verify that
	\begin{equation}
		\left(\frac{1}{k_n^\beta}-\frac{1}{\vp^\beta}\right)^+\nonumber
	\end{equation}
	is a legitimate test function for \eqref{use}. Upon using it, we derive
	\begin{eqnarray}
		\lefteqn{\frac{d}{dt}\irn\int_{k_n}^{\vp}	\left(\frac{1}{k_n^\beta}-\frac{1}{s^\beta}\right)^+dsdx +\nu\beta\int_{\{\vp(\cdot,t)\geq k_{n}\}}\frac{|\nabla\vp|^2}{\vp^{\beta+1}}dx}\nonumber\\
		&&+2A_r^{-1}\nu\irn|\nabla u|^2	\left(\frac{1}{k_n^\beta}-\frac{1}{\vp^\beta}\right)^+dx\nonumber\\
		&=&-\irn u\cdot\nabla\vp\left(\frac{1}{k_n^\beta}-\frac{1}{\vp^\beta}\right)^+dx-2A_r^{-1}\beta\int_{\{\vp(\cdot,t)\geq k_{n}\}}\frac{ b\otimes bu\cdot\nabla\vp}{\vp^{\beta+1}}dx\nonumber\\
		&&-2A_r^{-1}\irn(b\cdot\nabla)u\cdot b\left(\frac{1}{k_n^\beta}-\frac{1}{\vp^\beta}\right)^+dx+2A_r^{-1}\beta\int_{\{\vp(\cdot,t)\geq k_{n}\}}\frac{pu\cdot\nabla\vp}{\vp^{\beta+1}}dx.\label{pub1}
	\end{eqnarray}
	We proceed to estimate each term on the right-hand of the above inequality. We begin with the first term.
	Invoking \eqref{am6} yields
	\begin{eqnarray*}
		-\irn u\cdot\nabla\vp\left(\frac{1}{k_n^\beta}-\frac{1}{\vp^\beta}\right)^+dx=-\irn u\cdot\nabla\int_{k_n}^{\vp}\left(\frac{1}{k_n^\beta}-\frac{1}{s^\beta}\right)^+dsdx=0.\nonumber
	\end{eqnarray*}
	The second and the last term can be estimated via Young's inequality \eqref{you} as follows:
	\begin{eqnarray}
		-2A_r^{-1}\beta\int_{\{\vp(\cdot,t)\geq k_{n}\}}\frac{ b\otimes bu\cdot\nabla\vp}{\vp^{\beta+1}}dx	&\leq& \frac{\nu\beta}{4}\int_{\{\vp(\cdot,t)\geq k_{n}\}}\frac{|\nabla\vp|^2}{\vp^{\beta+1}}dx+\frac{4\beta}{\nu A_r^{2}}\int_{\{\vp(\cdot,t)\geq k_{n}\}}\frac{|b|^4|u|^2}{\vp^{\beta+1}}dx\nonumber\\
		&\leq& \frac{\nu\beta}{4}\int_{\{\vp(\cdot,t)\geq k_{n}\}}\frac{|\nabla\vp|^2}{\vp^{\beta+1}}dx+\frac{4\beta}{\nu A_r k_n^\beta}\int_{\{\vp(\cdot,t)\geq k_{n}\}}w^2dx,\nonumber\\
		2A_r^{-1}\beta\int_{\{\vp(\cdot,t)\geq k_{n}\}}\frac{pu\cdot\nabla\vp}{\vp^{\beta+1}}dx
		&\leq&\frac{\nu\beta}{4}\int_{\{\vp(\cdot,t)\geq k_{n}\}}\frac{|\nabla\vp|^2}{\vp^{\beta+1}}dx+\frac{4\beta}{\nu A_r k_n^\beta}\int_{\{\vp(\cdot,t)\geq k_{n}\}}p^2dx.\nonumber
	\end{eqnarray}
	The third term can be treated in a similar manner. That is, we have
	\begin{eqnarray}
		\lefteqn{-2A_r^{-1}\irn(b\cdot\nabla)u\cdot b\left(\frac{1}{k_n^\beta}-\frac{1}{\vp^\beta}\right)^+dx}	\nonumber\\
		&\leq&2A_r^{-1}\nu\irn|\nabla u|^2	\left(\frac{1}{k_n^\beta}-\frac{1}{\vp^\beta}\right)^+dx+\frac{1}{2\nu A_r}\irn|b|^4\left(\frac{1}{k_n^\beta}-\frac{1}{\vp^\beta}\right)^+dx	\nonumber\\
		&\leq&2A_r^{-1}\nu\irn|\nabla u|^2	\left(\frac{1}{k_n^\beta}-\frac{1}{\vp^\beta}\right)^+dx+\frac{1}{2\nu A_r k_n^\beta}\int_{\{\vp(\cdot,t)\geq k_{n}\}}w^2dx.\nonumber
	\end{eqnarray}
	Collecting the preceding estimates in \eqref{pub1}, we arrive at
	\begin{eqnarray}
		\lefteqn{\frac{d}{dt}\irn\int_{k_n}^{\vp}	\left(\frac{1}{k_n^\beta}-\frac{1}{s^\beta}\right)^+dsdx +\frac{\nu\beta}{2}\int_{\{\vp(\cdot,t)\geq k_{n}\}}\frac{|\nabla\vp|^2}{\vp^{\beta+1}}dx}\nonumber\\
		&\leq&	\frac{9\beta}{2\nu A_r k_n^\beta}\int_{\{\vp(\cdot,t)\geq k_{n}\}}w^2dx+\frac{4\beta}{\nu A_r k_n^\beta}\int_{\{\vp(\cdot,t)\geq k_{n}\}}p^2dx.\nonumber
	\end{eqnarray}
	Integrate with respect to $t$ to obtain
	\begin{eqnarray}
		\sup_{0\leq t\leq T}\irn\int_{k_n}^{\vp}	\left(\frac{1}{k_n^\beta}-\frac{1}{s^\beta}\right)^+dsdx+\int_{\{\vp\geq k_{n}\}}\frac{|\nabla\vp|^2}{\vp^{\beta+1}}dxdt\leq \frac{cI}{A_r k_n^\beta},\label{hope10}
	\end{eqnarray}
	where $c=c(\nu,\beta)$ and
	\begin{eqnarray}
		I_n&=&\int_{\{\vp\geq k_{n}\}}w^2dxdt+\int_{\{\vp\geq k_{n}\}}p^2dxdt.\nonumber
	\end{eqnarray}
	We easily verify
	\begin{eqnarray*}
		\int_{\{\vp\geq k_{n}\}}\frac{|\nabla\vp|^2}{\vp^{\beta+1}}dxdt	&=&\frac{4}{(\beta-1)^2}\int_{Q_T}\left|\nabla\left(\frac{1}{ k_{n}^{\frac{\beta-1}{2}}}-\frac{1}{\vp ^{\frac{\beta-1}{2}}}\right)^+\right|^2dxdt.
	\end{eqnarray*}
	Moreover,
	\begin{equation}
		\int_{k_{n}}^{\vp }\left(\frac{1}{k_{n}^\beta}-\frac{1}{\mu^\beta}\right)^+d\mu\geq \frac{2\beta}{(1-\beta)^2} \left[\left(\frac{1}{k_{n}^{\frac{\beta-1}{2}}}-\frac{1}{\vp ^{\frac{\beta-1}{2}}}\right)^+\right]^2.\nonumber
	\end{equation}
	Incorporating them into \eqref{hope10} yields
	\begin{equation}
		\sup_{0\leq t\leq T}\irn\left[\left(\frac{1}{k_{n}^{\frac{\beta-1}{2}}}-\frac{1}{\vp ^{\frac{\beta-1}{2}}}\right)^+\right]^2dx+\int_{Q_T}\left|\nabla\left(\frac{1}{ k_{n}^{\frac{\beta-1}{2}}}-\frac{1}{\vp ^{\frac{\beta-1}{2}}}\right)^+\right|^2dxdt\leq \frac{cI_n}{A_r k_n^\beta}.\nonumber
	\end{equation}
	%
	We calculate, with the aid of \eqref{sob}, that
	\begin{eqnarray}
		\lefteqn{	\irnt\left[\left(\frac{1}{ k_{n}^{\frac{\beta-1}{2}}}-\frac{1}{\vp ^{\frac{\beta-1}{2}}}\right)^+\right]^{\frac{4}{N}+2}dxdt}\nonumber\\
		&\leq&\int_{0}^{T}\left(\irn\left[\left(\frac{1}{ k_{n}^{\frac{\beta-1}{2}}}-\frac{1}{\vp ^{\frac{\beta-1}{2}}}\right)^+\right]^{2}dx \right)^{\frac{2}{N}}\left(\irn\left[\left(\frac{1}{ k_{n}^{\frac{\beta-1}{2}}}-\frac{1}{\vp ^{\frac{\beta-1}{2}}}\right)^+\right]^{\frac{2N}{N-2}}dx\right)^{\frac{N-2}{N}}dt\nonumber\\
		&\leq& c\left(\sup_{0\leq t\leq T}\irn\left[\left(\frac{1}{ k_{n}^{\frac{\beta-1}{2}}}-\frac{1}{\vp ^{\frac{\beta-1}{2}}}\right)^+\right]^{2}dx \right)^{\frac{2}{N}}\irnt\left|\nabla\left(\frac{1}{ k_{n}^{\frac{\beta-1}{2}}}-\frac{1}{\vp ^{\frac{\beta-1}{2}}}\right)^+\right|^2dxdt\nonumber\\
		&\leq& c\left(\frac{I_n}{A_r k_n^\beta}\right)^{\frac{N+2}{N}}.\label{rub3}
	\end{eqnarray}
	It is easy to verify that
	\begin{eqnarray*}
		\irnt\left[\left(\frac{1}{ k_{n}^{\frac{\beta-1}{2}}}-\frac{1}{\vp ^{\frac{\beta-1}{2}}}\right)^+\right]^{\frac{4}{N}+2}dxdt	&\geq&
		\int_{\{\vp\geq k_{n+1}\}}\left[\left(\frac{1}{ k_{n}^{\frac{\beta-1}{2}}}-\frac{1}{\vp ^{\frac{\beta-1}{2}}}\right)^+\right]^{\frac{4}{N}+2}dxdt\nonumber\\
		&\geq&\left(\frac{1}{ k_{n}^{\frac{\beta-1}{2}}}-\frac{1}{k_{n+1}^{\frac{\beta-1}{2}}}\right)^{\frac{4}{N}+2}|\{\vp\geq k_{n+1}\}|
		\nonumber\\
		&=&\left(\frac{\left(1-\frac{1}{2^{n+2}}\right)^{\frac{\beta-1}{2}}-\left(1-\frac{1}{2^{n+1}}\right)^{\frac{\beta-1}{2}}}{ k^{\frac{\beta-1}{2}}\left(1-\frac{1}{2^{n+1}}\right)^{\frac{\beta-1}{2}}\left(1-\frac{1}{2^{n+2}}\right)^{\frac{\beta-1}{2}}}\right)^{\frac{4}{N}+2}|\{\vp\geq k_{n+1}\}|
		\nonumber\\
		&	\geq&\frac{c|\{\vp\geq k_{n+1}\}|}{2^{(\frac{4}{N}+2)n}k^{\frac{(\beta-1)(N+2)}{N}}}.
	\end{eqnarray*}
	Combining this with \eqref{rub3} yields
	\begin{equation}
		|\{\vp\geq k_{n+1}\}|^{\frac{N}{N+2}}\leq \frac{c4^nI_n}{A_r k} .\nonumber
	\end{equation}
	By the same token, we can establish
	\begin{equation}
		|\{\psi\geq k_{n+1}\}|^{\frac{N}{N+2}}\leq \frac{c4^nJ_n}{A_r k} ,\nonumber
	\end{equation}
	where
	\begin{equation}
		J_n=\int_{\{\psi\geq k_{n}\}}w^2dxdt.\nonumber
	\end{equation}
	Set 
	\begin{equation}
		y_n=|\{\vp\geq k_{n}\}|+|\{\psi\geq k_{n}\}|.\nonumber
	\end{equation}
	Then we easily see that
	\begin{eqnarray}\label{yn2}
		y_{n+1} &=&\left(|\{\vp\geq k_{n+1}\}|+|\{\psi\geq k_{n+1}\}|\right)^{\frac{N}{N+2}+\frac{2}{N+2}}\nonumber\\
		&\leq&\frac{c4^n(I_n+J_n)}{A_r k} y_n^{\frac{2}{N+2}}.
	\end{eqnarray}
	Now we turn our attention to $p$. Take the divergence of both sides of \eqref{nsf1} to obtain
	\begin{equation*}
		-\Delta p=\nabla \cdot\left((u\cdot\nabla) u-(b\cdot\nabla) b\right).
	\end{equation*}
	In view of the classical representation theorem (\cite{GT}, p. 17), we can write $p$ as 
	\begin{equation*}
		p(x,t)=\irn \Gamma(y-x)\left[(u\cdot\nabla) u-(b\cdot\nabla) b\right]dy.
	\end{equation*} 
	We observe from \eqref{nsf4} that
	\begin{equation*}
		\irn \Gamma(y-x)\nabla\cdot((u\cdot\nabla) u-(b\cdot\nabla) b)dy=\irn \Gamma_{y_iy_j}(y-x)\left(u_iu_j-b_ib_j\right) dy.
	\end{equation*}
	It is a well known fact that $\partial^2_{y_iy_j}\Gamma(y)$ is a Calder\'{o}n-Zygmund kernel. A result of \cite{CFL} asserts that for each $ s \in (1,\infty)$ there is a positive number $c_s$ determined by $N$ and $s $ such that 
	\begin{equation*}
		\left\|\irn \Gamma_{y_iy_j}(y-x)\left(u_iu_j-b_ib_j\right) dy\right\|_{s,\rn}\leq c_s\|u_iu_j-b_ib_j\|_{s,\rn}\leq c\|w\|_{s,\rn}.
	\end{equation*}
	As a result, we have
	\begin{equation}\label{use1}
		\|p\|_{s,\rn}\leq c_s\|w\|_{s,\rn}\ \ \mbox{for each $s>1$}.
	\end{equation}
	We pick 
	\begin{equation}\label{qcon}
		q>N+2.
	\end{equation}
	Subsequently,  \eqref{use1} asserts
	\begin{equation*}
		\int_{\{\vp\geq k_{n}\}}p^2dxdt\leq \|p\|_{q,Q_T}^2|\{\vp\geq k_{n}\}|^{1-\frac{2}{q}}\leq c\|w\|_{q,Q_T}^2|\{\vp\geq k_{n}\}|^{1-\frac{2}{q}}.
	\end{equation*}
	Similarly,
	\begin{eqnarray*}
		\int_{\{\vp\geq k_{n}\}}w^2dxdt&\leq &\|w\|_{q,Q_T}^2\{\vp\geq k_{n}\}|^{1-\frac{2}{q}},\\
		\int_{\{\psi\geq k_{n}\}}w^2dxdt&\leq &\|w\|_{q,Q_T}^2\{\psi\geq k_{n}\}|^{1-\frac{2}{q}}.
	\end{eqnarray*}
	Combining them yields
	\begin{equation}
		I_n+J_n\leq c\|w\|_{q,Q_T}^2y_n^{1-\frac{2}{q}}.\nonumber
	\end{equation}
	Incorporate this into \eqref{yn2} to get
	\begin{equation}\label{hap1}
		y_{n+1}\leq \frac{c4^n\|w\|_{q,Q_T}^2}{A_r k} y_n^{1+\alpha}. 
	\end{equation}
	where
	\begin{equation}\label{adef}
		\alpha=\frac{2}{N+2}-\frac{2}{q}=\frac{2(q-N-2)}{(N+2)q} >0\ \ \mbox{due to  \eqref{qcon}}.
	\end{equation}
	At this point, our constant $c$ in \eqref{hap1} depends on $q, N, \nu, \eta$. In the classical  De Giorgi iteration scheme, one just applies Lemma \ref{ynb} to \eqref{hap1}, which will lead to an estimate like \eqref{int2}. The result  is not useful here.  What makes our method different is the introduction of the following two parameters $\ell$ and $j$. The condition on $\ell$ is
	\begin{equation}
		\ell>r.\nonumber
	\end{equation}
	We further require
	\begin{equation}\label{qcl}
		q>\lz.
	\end{equation}
	Choose $k$ so large that
	\begin{equation}\label{kcon6}
		\max\left\{L_1\left\|\vp+\psi\right\|_{\ell,Q_T}^{\frac{\ell}{\ell-r}}, L_2\|w\|_{r,Q_T}^{-1}\|w\|_{ q,Q_T}^{\frac{q}{q-\lz}}\right\}	\leq k,
	\end{equation}
	where $L_1$ and $L_2$ are two positive numbers to be determined.  
	Consequently, for each 
	\begin{equation*}
		j>0
	\end{equation*}
	there hold
	\begin{equation*}
		L_1^{j\alpha}\left\|\vp+\psi\right\|_{\ell,Q_T}^{\frac{j\alpha\ell}{\ell-r}}\leq k^{j\alpha}	\ \ \mbox{and}\ \ L_2^{j\alpha+1}\|w\|_{r,Q_T}^{-(j\alpha+1)}\|w\|_{ q,Q_T}^{\frac{(j\alpha+1) q}{q-\lz}}\leq k^{j\alpha+1}.	
	\end{equation*}
	Use them in \eqref{hap1} and keep \eqref{ard} in mind to deduce
	\begin{eqnarray*}
		y_{n+1}&\leq&	\frac{c4^n\|w\|_{r,Q_T}^{j\alpha}\|w\|_{ q,Q_T}^{b} k^{2j\alpha}}{L_1^{j\alpha}L_2^{j\alpha+1}\left\|\vp+\psi\right\|_{\ell,Q_T}^{\frac{j\alpha\ell}{\ell-r}}} y_n^{1+\alpha}
	\end{eqnarray*}
	where
	\begin{equation}\label{bdef}
		b=2-\frac{ (j\alpha+1)q}{q-\lz}=\frac{q-2\lz}{q-\lz}-\frac{ j\alpha q}{q-\lz}.
	\end{equation}
	The introduction of $j$ here is also very crucial. As we shall see, by choosing $j$ suitably large, we can make certain exponents in our nonlinear terms negative. This will enable us to balance out large positive exponents. 
	
	To apply Lemma \ref{ynb}, we first recall \eqref{ard} and \eqref{aw1} to deduce
	\begin{eqnarray*}
		y_0&=&\left|\left\{\vp>\frac{k}{2}\right\}\right|+\left|\left\{\psi>\frac{k}{2}\right\}\right|\nonumber\\
		&\leq& \irnt\left(\frac{2\vp}{k}\right)^{r}dxdt+\irnt\left(\frac{2\psi}{k}\right)^{r}dxdt\nonumber\\
		&\leq&\frac{2^{r+1}}{k^r}\irnt\left(\vp+\psi\right)^rdxdt\leq \frac{2^{r+1}}{k^r}.
	\end{eqnarray*}
	Assume that
	\begin{equation}\label{rcon}
		r>2j.
	\end{equation}
	Subsequently, we can pick $k$ so large that
	\begin{eqnarray}
		\frac{2^{r+1}}{k^{r-2j}}\leq \frac{L_1^{j}L_2^{\frac{\alpha j+1}{\alpha}}\left\|\vp+\psi\right\|_{\ell,Q_T}^{\frac{j\ell}{\ell-r}}}{ c^{\frac{1}{\alpha}}4^{\frac{1}{\alpha^2}}\|w\|_{ r,Q_T}^{j}\|w\|_{ q,Q_T}^{\frac{b}{\alpha}}}.\label{jm7}
	\end{eqnarray}
	Lemma \ref{ynb} asserts 
	$$\lim_{n\ra \infty}y_n=|\{\vp\geq k\}|+|\{\psi\geq k\}|=0.$$ 
	That is,
	\begin{equation}\label{es1}
		\sup_{Q_T}(	\vp+\psi)\leq 2 k.
	\end{equation}
	In view of \eqref{kcon1},  \eqref{kcon6}, and \eqref{jm7}, it is enough for us to take
	\begin{eqnarray}
		k&=&2\|\vp(\cdot,0)+\psi(\cdot,0)\|_{\infty,\rn}+L_1 \left\|\vp+\psi\right\|_{\ell,Q_T}^{\frac{\ell}{\ell-r}}+L_2\|w\|_{r,Q_T}^{-1}\|w\|_{ q,Q_T}^{\frac{q}{q-\lz}}\nonumber\\ &&+ c_14^{-\frac{j}{r-2j}}
		L_1^{-\frac{j}{r-2j}}L_2^{-\frac{\alpha j+1}{\alpha(r-2j)}}\left\|\vp+\psi\right\|_{\ell,Q_T}^{-\beta_1}\|w\|_{ r,Q_T}^{\frac{j}{r-2j}}\|w\|_{ q,Q_T}^{\frac{b}{\alpha\left(r-2j\right)}},\nonumber
	\end{eqnarray}
	where
	\begin{eqnarray}
		c_1&=&2^{\frac{r+1}{r-2j}}c^{\frac{1}{\alpha\left(r-2j\right)}}4^{\frac{1}{\alpha^2\left(r-2j\right)}+\frac{j}{r-2j}},\label{cr}\\
		\beta_1&=&\frac{j\ell}{\left(r-2j\right)(\ell-r)}.\label{b1}
	\end{eqnarray}
	Remember that the constant $c$ on the right-hand side of \eqref{cr} is independent of $r, j$, and $\ell$.
	Now \eqref{es1} becomes
	\begin{eqnarray}
		\left\|\vp+\psi\right\|_{\infty,Q_T}&\leq& 4\|\vp(\cdot,0)+\psi(\cdot,0)\|_{\infty,\rn}+2L_1 \left\|\vp+\psi\right\|_{\ell,Q_T}^{\frac{\ell}{\ell-r}}+2L_2\|w\|_{r,Q_T}^{-1}\|w\|_{ q,Q_T}^{\frac{q}{q-\lz}}\nonumber\\ &&+ c_14^{-\frac{j}{r-2j}}
		L_1^{-\frac{j}{r-2j}}L_2^{-\frac{\alpha j+1}{\alpha(r-2j)}}\left\|\vp+\psi\right\|_{\ell,Q_T}^{-\beta_1}\|w\|_{ r,Q_T}^{\frac{j}{r-2j}}\|w\|_{ q,Q_T}^{\frac{b}{\alpha\left(r-2j\right)}}.\label{jm8}
	\end{eqnarray}
	In view of \eqref{ard} and \eqref{aw1}, we have 
	\begin{eqnarray}
		\left\|\vp+\psi\right\|	_{\ell,Q_T}^{\frac{\ell}{\ell-r}}&=&\left\|\frac{w}{\wnr}\right\|	_{\ell,Q_T}^{\frac{\ell}{\ell-r}}\nonumber\\
		&\leq&\frac{1}{\wnr^{\frac{\ell}{\ell-r}}}\left(\left\|w\right\|	_{\infty,Q_T}^{\frac{\ell-r}{\ell}}\left\|w\right\|	_{r,Q_T}^{\frac{r}{\ell}}\right)^{\frac{\ell}{\ell-r}}
		=\left\|\vp+\psi\right\|	_{\infty,Q_T} .\nonumber
	\end{eqnarray}
	Plug this into \eqref{jm8} and take $ L_1=\frac{1}{4}$ in the resulting inequality
	to yield
	\begin{eqnarray*}
		\left\|\vp+\psi\right\|_{\infty,Q_T}&\leq& 8\|\vp(\cdot,0)+\psi(\cdot,0)\|_{\infty,\rn}+4L_2\|w\|_{r,Q_T}^{-1}\|w\|_{ q,Q_T}^{\frac{q}{q-\lz}}\nonumber\\ &&+ 2c_1
		L_2^{-\frac{\alpha j+1}{\alpha(r-2j)}}\left\|\vp+\psi\right\|_{\ell,Q_T}^{-\beta_1}\|w\|_{ r,Q_T}^{\frac{j}{r-2j}}\|w\|_{ q,Q_T}^{\frac{b}{\alpha\left(r-2j\right)}}.
	\end{eqnarray*}
	Multiply through the inequality by $A_r$ and keep \eqref{ard} and \eqref{aw1} in mind to deduce
	\begin{eqnarray}
		\|w\|_{\infty,Q_T}&\leq&8 \|w(\cdot,0)\|_{\infty,\rn}+4L_2\|w\|_{ q,Q_T}^{\frac{q}{q-\lz}}\nonumber\\ &&+	2c_1L_2^{-\frac{\alpha j+1}{\alpha(r-2j)}}\left\|w\right\|_{\ell,Q_T}^{-\beta_1}\|w\|_{ r,Q_T}^{1+\frac{j}{r-2j}+\beta_1}\|w\|_{ q,Q_T}^{\frac{b}{\alpha\left(r-2j\right)}} .\label{est1}
	\end{eqnarray}
	By \eqref{qcl}, we have the interpolation inequality
	\begin{equation}\label{intq}
		\|w\|_{q,Q_T}\leq \|w\|_{\infty,Q_T}^{\frac{q-\lz}{q}}\|w\|_{\lz,Q_T}^{\frac{\lz}{q}}.
	\end{equation}
	Choose $L_2$  so that
	\begin{equation*}
		4L_2\|w\|_{\lz,Q_T}^{\frac{\lz}{q-\lz}}= \frac{1}{2}.
	\end{equation*}
	Incorporate this into \eqref{est1} to deduce
	\begin{eqnarray}
		\|w\|_{\infty,Q_T}&\leq& 16\|w(\cdot,0)\|_{\infty,\rn}+ c_2\|w\|_{\lz,Q_T}^{s_1}\|w\|_{\ell,Q_T}^{-\beta_1}\|w\|_{r,Q_T}^{1+\frac{j}{r-2j}+\beta_1}\|w\|_{q,Q_T}^{\frac{b}{\alpha\left(r-2j\right)}},\label{est2}
	\end{eqnarray}
	where
	\begin{equation}\label{c2}
		c_2=4c_18^{\frac{\alpha j+1}{\alpha(r-2j)}}=c^{\frac{1}{\alpha\left(r-2j\right)}}4^{\frac{1}{\alpha^2\left(r-2j\right)}+\frac{r-j}{r-2j}+\frac{2(\alpha j+1)}{\alpha(r-2j)}+\frac{r+1}{2(r-2j)}},\ \ s_1=\frac{\lz(\alpha j+1) }{\alpha(q-\lz)\left(r-2j\right)}.
	\end{equation}
	In summary, we have shown:
	\begin{clm}
	For each $q$ satisfying \eqref{qcon} there is a constant $c=c(N, q,\nu,\eta )$ such that \eqref{est2} holds for each
		\begin{equation}
			j>0,\ \ r>2j, \ \ \mbox{and} \ \ \ell>r.\nonumber
		\end{equation}
			\end{clm}
	We wish to transform the last three different norms in \eqref{est2} into a single one. Our first attempt yields the following result.
	\begin{clm} Let $q$ be given as in \eqref{qcon}. We further assume
		\begin{equation}\label{qc}
			q> 2\lz.
		\end{equation}
			Then for each
		\begin{equation}
		r>\mq\equiv\frac{q-2\lz}{\alpha q}
			,\label{mqd}
		\end{equation}
	  there exist two positive numbers $c_3, s_0$ such that	
		\begin{eqnarray}
			\|w\|_{\infty,Q_T}&\leq& 16\|w(\cdot,0)\|_{\infty,\rn}+ c_3\|w\|_{\lz,Q_T}^{s_0}\|w\|_{r,Q_T}^{\frac{r}{r-\mq}}.\label{ha7}
		\end{eqnarray}
		Moreover, the dependence of $c_3$ and $s_0$ on $r$ is such that both $\ln c_3$ and $s_0$ are smooth functions of $r$ over $(\mq, \infty)$.
	\end{clm}
	\begin{proof}
		Insert \eqref{b1} into  \eqref{est2} and take $\ell\ra \infty$ in the resulting inequality to obtain 
		\begin{eqnarray}
			\|w\|_{\infty,Q_T}&\leq& 16\|w(\cdot,0)\|_{\infty,\rn}+c_2\|w\|_{\lz,Q_T}^{s_1}\|w\|_{\infty,Q_T}^{-\frac{j}{r-2j}}\|w\|_{r,Q_T}^{\frac{r}{r-2j}}\|w\|_{q,Q_T}^{\frac{b}{\alpha\left(r-2j\right)}} .\label{est13}
		\end{eqnarray}
		Here the constants $c_2$ and $s_1$ remain the same as those in \eqref{est2} because they are independent of $\ell$.
		Under \eqref{qc},  inequality \eqref{intq} remains valid. 
		Raise both side of the inequality to the power of $\frac{jq}{(r-2j)(q-\lz)}$ to deduce
		\begin{equation}
			\|w\|_{q,Q_T}^{\frac{jq}{(r-2j)(q-\lz)}}\leq \|w\|_{\infty,Q_T}^{\frac{j}{r-2j}}\|w\|_{\lz,Q_T}^{\frac{j\lz}{(r-2j)(q-\lz)}}.\nonumber
		\end{equation}	
		Combining this with \eqref{est13} yields
		\begin{eqnarray}
			\|w\|_{\infty,Q_T}&\leq& 16\|w(\cdot,0)\|_{\infty,\rn}+ c_2\|w\|_{\lz,Q_T}^{s_2}\|w\|_{r,Q_T}^{\frac{r}{r-2j}}\|w\|_{q,Q_T}^{\frac{b}{\alpha\left(r-2j\right)}-\frac{jq}{(r-2j)(q-\lz)}}.\label{ha6}
		\end{eqnarray}
		where 
		\begin{equation}\label{s2}
			s_2=s_1+\frac{j\lz}{(r-2j)(q-\lz)}.
		\end{equation}
		We pick $j$ so that
		\begin{equation}\label{je}
			\frac{b}{\alpha\left(r-2j\right)}-\frac{jq}{(r-2j)(q-\lz)}=0.
		\end{equation}
		Plug \eqref{bdef} into this to derive
		\begin{equation}
			\frac{q-2\lz-2\alpha jq}{\alpha(r-2j)(q-\lz)}=0,\nonumber
		\end{equation}
		which means that we can achieve \eqref{je} by taking
		\begin{equation}
			j=\frac{q-2\lz}{2\alpha q}=\frac{\mq}{2}.\nonumber
		\end{equation}
	We arrive at \eqref{ha7} by inserting the preceding choice of $j$ into \eqref{ha6}. In view of  \eqref{c2} and \eqref{s2}, we have
	\begin{eqnarray}
		s_0&=&s_2|_{j=\frac{\mq}{2}}=\frac{\lz(\alpha \mq+2) }{2\alpha(q-\lz)\left(r-\mq\right)}+\frac{\mq\lz}{2(r-\mq)(q-\lz)},\nonumber
		\\
		c_3&=&c_2|_{j=\frac{\mq}{2}}=c^{\frac{1}{\alpha\left(r-\mq\right)}}4^{\frac{1}{\alpha^2\left(r-\mq\right)}+\frac{2r-\mq}{2(r-\mq)}+\frac{\alpha \mq+2}{\alpha(r-\mq)}+\frac{r+1}{2(r-\mq)}}.\label{c21}
	\end{eqnarray}
	Note that the constant $c$ on the right-hand side of \eqref{c21} is the same as the one in \eqref{cr}, and hence it depends on $N,q, \nu, \eta$ only. The proof is complete.
	\end{proof}

	We easily see from \eqref{mqd} and \eqref{adef} that
	\begin{equation}
		\lim_{q\ra \infty}\mq=\frac{N+2}{2}.\nonumber
	\end{equation}
	Consequently,	if
	\begin{equation}\label{lz1}
		\lz>\frac{N+2}{2}
	\end{equation}
	we may pick $q$ so that
	\begin{equation}
		\lz>\mq.\nonumber
	\end{equation}		
	This implies that we can take $r=\lz$ in \eqref{ha7}. That is to say, $w\in L^\infty(Q_T)$ whenever  $ w\in L^{\lz}(Q_T)$ with $\lz$ satisfying \eqref{lz1}.
	
	From here on we will assume  
	\begin{equation}\label{lz}
		\lz< \frac{N+2}{2}.
	\end{equation}
	As a result, we have \eqref{qc}. That is to say, \eqref{ha7} is true under the assumptions \eqref{qcon}, \eqref{lz}, and \eqref{mqd}.
 Moreover,  plug \eqref{adef} into the definition of $\mq$ in \eqref{mqd} to derive 
	\begin{equation} \label{ml}
		\mq=\frac{(N+2)(q-2\lz)}{2(q-N-2)}>\frac{N+2}{2}>\lz.
	\end{equation}
	 We can also represent $b$ in \eqref{bdef} as
	\begin{equation}
		b=\frac{\alpha q(\mq-j)}{q-\lz}.\nonumber
	\end{equation}
	Plugging this and \eqref{b1} into \eqref{est2}, we arrive at
		\begin{eqnarray}
		\|w\|_{\infty,Q_T}&\leq& 16\|w(\cdot,0)\|_{\infty,\rn}\nonumber\\
		&&+ c_2\|w\|_{\lz,Q_T}^{s_1}\|w\|_{\ell,Q_T}^{-\frac{j\ell}{(r-2j)(\ell-r)}}\|w\|_{r,Q_T}^{1+\frac{j}{r-2j}+\frac{j\ell}{(r-2j)(\ell-r)}}\|w\|_{q,Q_T}^{\frac{q(\mq-j)}{(q-\lz)\left(r-2j\right)}}.\label{est3}
	\end{eqnarray}
	
	We wish to improve \eqref{ha7} by making better choice of the parameters in \eqref{est3}. 
	What motivates us to do so is the observation that the sum of the last three exponents in \eqref{est3} is given by
	\begin{equation}
		1+\frac{j}{r-2j}+\frac{q(\mq-j)}{(q-\lz)\left(r-2j\right)}=1+\frac{-\lz j+\mq q}{(r-2j)(q-\lz)}.\nonumber
	\end{equation} 
 Obviously, the total sum can be made smaller than 1 if we choose $j$ suitably large.	This suggests that one should try to transform the last three norms in \eqref{est3} into a single one  in such a way that the total sum is largely preserved.
To this end, we consider the function
	\begin{equation}
		f(r)=\ln\left(\iqt w^rdxdt\right)\ \ \mbox{for $r\geq \lz$}.\nonumber
	\end{equation}
	Remember that $w$ is a very ``nice'' function. We easily see that
	\begin{eqnarray}
		f^\prime(r)&=&\frac{\iqt w^r\ln wdxdt}{\iqt w^rdxdt},\label{fp}\\
		f^{\prime\prime}(r)&=&\frac{\iqt w^r\ln^2 wdxdt\iqt w^rdxdt-\left(\iqt w^r\ln wdxdt\right)^2}{\left(\iqt w^rdxdt\right)^2}.\nonumber
	\end{eqnarray}
	H\"{o}lder inequality asserts that
	\begin{equation}\label{pp}
		f^{\prime\prime}(r)\geq 0.
	\end{equation}
	That is, $\fr$ is convex over $(\lz, \infty)$. We will exploit this property extensively. 	We also need the following limit:
	\begin{eqnarray}
		\lim_{\ell\ra r}\left(\frac{\|w\|_{\ell,Q_T}}{\|w\|_{r,Q_T}}\right)^{\frac{1}{\ell-r}}&=&	\lim_{\ell\ra r}e^{\frac{\ln \|w\|_{\ell,Q_T}-\ln \|w\|_{r,Q_T}}{\ell-r}}\nonumber\\
		&=&e^{\left(\frac{f(r)}{r}\right)^\prime}
		=\|w\|_{r,Q_T}^{-\frac{1}{r}}e^{\frac{\iqt w^r\ln wdxdt}{r\iqt w^rdxdt}}.\nonumber
	\end{eqnarray}
	
\begin{remark}
	We may  assume that
	\begin{equation}\label{fpp}
		\fp>0\ \ \mbox{for each $r>\mq$.}
	\end{equation}
Indeed, suppose that the above is not true, i.e., there is a $r>\mq$ such that 
\begin{equation}\label{wer}
	\fp\leq0.
\end{equation}
By \eqref{fp}, we have
\begin{eqnarray}
	\int_{\{w\geq 1\}}w^r\ln w dxdt&\leq& -	\int_{\{w< 1\}}w^r\ln w dxdt\nonumber\\
	&\leq& \max_{s\in[0,1]}	|s^{r-\frac{N+2}{N}}\ln s|\int_{\{w< 1\}}w^{\frac{N+2}{N}} dxdt\leq c.\nonumber
\end{eqnarray}
The last step here is due to \eqref{wi1}.
For each $L>1$ we have
\begin{eqnarray}
	\iqt w^rdxdt&=&	\int_{\{w\geq L\}}w^r dxdt+\int_{\{w< L\}}w^r dxdt\nonumber\\
	&\leq&\frac{1}{\ln L}\int_{\{w\geq L\}}w^r \ln wdxdt+L^{r-\frac{N+2}{N}}\int_{\{w< L\}}w^ {\frac{N+2}{N}}dxdt\leq c.\nonumber
\end{eqnarray}
This combined with \eqref{ha7} implies our theorem. That is to say, were \eqref{wer} true for some $r>\mq$, we would have nothing more to prove. In this case, the constant in \eqref{es} also depends on $\|\uo\|_{2,\rn}$ and $\|\bo\|_{2,\rn}$. By the same token, we may also assume
\begin{equation}\label{fp1}
	f(s)>0 \ \ \mbox{on $(\mq, \infty)$.}
\end{equation}
However, neither \eqref{fpp} nor \eqref{fp1} will be used in the subsequent proof.
\end{remark}

	Remember  that the constant $c_2$ in \eqref{est3} does not depend $\ell$. For simplicity, from here on we will just denote it by $c$.  With this in mind, 
	we take $\ell\ra r$ in \eqref{est3}, thereby obtaining
	\begin{eqnarray}
		\|w\|_{\infty,Q_T}&\leq& 16\|w(\cdot,0)\|_{\infty,\rn}\nonumber\\
		&&+ c\|w\|_{\lz,Q_T}^{s_1}\left[\lim_{\ell\ra r}\left(\frac{\|w\|_{\ell,Q_T}}{\|w\|_{r,Q_T}}\right)^{\frac{1}{\ell-r}}\right]^{-\frac{j\ell}{r-2j}}\|w\|_{r,Q_T}^{\frac{r-j}{r-2j}}\|w\|_{q,Q_T}^{\frac{q(\mq-j)}{(q-\lz)(r-2j)}}\nonumber\\
		&=& 16\|w(\cdot,0)\|_{\infty,\rn}\nonumber\\
		&&+ c\|w\|_{\lz,Q_T}^{s_1}\left[\|w\|_{r,Q_T}^{-1}e^{\frac{\int_{Q_T}w^r\ln wdxdt}{\int_{Q_T}w^rdxdt}}\right]^{-\frac{j}{r-2j}}\|w\|_{r,Q_T}^{\frac{r-j}{r-2j}}\|w\|_{q,Q_T}^{\frac{q(\mq-j)}{(q-\lz)(r-2j)}}\nonumber\\
		&=& 16\|w(\cdot,0)\|_{\infty,\rn}+ c\|w\|_{\lz,Q_T}^{s_1}e^{\frac{-j\fp}{\left(r-2j\right)}}\|w\|_{r,Q_T}^{\frac{r}{r-2j}}\|w\|_{q,Q_T}^{\frac{q(\mq-j)}{(q-\lz)(r-2j)}}.\nonumber\\
		&=& 16\|w(\cdot,0)\|_{\infty,\rn}+ c\|w\|_{\lz,Q_T}^{s_1}e^{\frac{-j\fp+\fr}{r-2j}}\|w\|_{q,Q_T}^{\frac{q(\mq-j)}{(q-\lz)(r-2j)}}.
		\label{est10}%
	\end{eqnarray}
To continue, we further refine our choice of parameters.	We easily see from \eqref{adef} and \eqref{mqd} that
	\begin{equation}
		q-2\mq=q-\frac{2(q-2\lz)}{\alpha q}=q-\frac{(N+2)(q-2\lz)}{(q-N-2)}.\nonumber
	\end{equation}
	By \eqref{lz}, $	q-2\mq$ is a strictly increasing function of $q$ on $(N+2,\infty)$. Clearly,  its range is $(-\infty, \infty)$. Hence, there is a unique $q_0\in 	(N+2,\infty)$ such that
	\begin{equation}
		q_0=2M_{q_0}.\nonumber
	\end{equation}
Pick
	\begin{equation}\label{qcon2}
		q>q_0,
	\end{equation}
	As a result, 
	\begin{equation}
		q>2\mq.\nonumber
	\end{equation}
	Subsequently, take
	\begin{equation}\label{jl}
		j>\frac{q}{2}.
	\end{equation}
	Introduce a new parameter
	\begin{equation}
		\ve>0.\nonumber
	\end{equation}
	Then
	select
	\begin{equation}
		r>2j+\ve.\nonumber
	\end{equation}
This choice of $r$ satisfies \eqref{rcon}. 
The introduction of $\ve$ is to ensure that $r$ stays away from $2j$ because the constant $c$ and the exponents in \eqref{est10} blow up as $r\ra 2j$.
In summary, we have
\begin{equation}\label{rq}
2\mq<	q<2j<2j+\ve<r.
\end{equation}
We can form the interpolation inequality
\begin{equation}\label{int}
	\|w\|_{2j+\ve,Q_T}\leq \|w\|_{r,Q_T}^{\frac{r(2j+\ve-q)}{(2j+\ve)(r-q)}}\|w\|_{q,Q_T}^{\frac{q(r-2j-\ve)}{(2j+\ve)(r-q)}}
\end{equation}
 On account of \eqref{rq},
\begin{equation}
	j>\mq.\nonumber
\end{equation}
We may raise both sides of \eqref{int} to the power of $\frac{(j-\mq)(2j+\ve)(r-q)}{(q-\lz)(r-2j)(r-2j-\ve)}$, thereby obtaining
\begin{equation}
	\|w\|_{2j+\ve,Q_T}^{\frac{(j-\mq)(2j+\ve)(r-q)}{(q-\lz)(r-2j)(r-2j-\ve)}}\leq \|w\|_{r,Q_T}^{\frac{r(2j+\ve-q)(j-\mq)}{(q-\lz)(r-2j)(r-2j-\ve)}}\|w\|_{q,Q_T}^{\frac{q(j-\mq)}{(q-\lz)(r-2j)}}.\nonumber
\end{equation}
Incorporating this into \eqref{est10}, we arrive at
\begin{eqnarray}
		\|w\|_{\infty,Q_T}&\leq& 16\|w(\cdot,0)\|_{\infty,\rn}\nonumber\\
		&&+ c\|w\|_{\lz,Q_T}^{s_1}e^{\frac{-j\fp+\fr}{r-2j}+\frac{(2j+\ve-q)(j-\mq)\fr}{(q-\lz)(r-2j)(r-2j-\ve)}}\|w\|_{2j+\ve,Q_T}^{-\frac{(j-\mq)(2j+\ve)(r-q)}{(q-\lz)(r-2j)(r-2j-\ve)}}.\label{ja2}
\end{eqnarray}
Fix
\begin{equation}
	\eta\in (0,1).\nonumber
\end{equation}
Without any loss of generality, we may assume
	\begin{eqnarray}
\lefteqn{-jf^\prime(s)+f(s)+\frac{(2j+\ve-q)(j-\mq)f(s)}{(q-\lz)(s-2j-\ve)}}\nonumber\\ &\geq&\left[\frac{(j-\mq)(s-q)}{(q-\lz)(s-2j-\ve)}+\frac{(s-2j)(1-\eta)}{2j+\ve-\lz}\right]f(2j+\ve) \ \ \mbox{for each $s\in (2j+\ve, r]$.}\label{pp1}
			\end{eqnarray}
	Indeed, suppose this is not true. That is, there is a $s\in (2j+\ve, r]$ such that
	\begin{eqnarray}
	\lefteqn{-jf^\prime(s)+f(s)+\frac{(2j+\ve-q)(j-\mq)f(s)}{(q-\lz)(s-2j-\ve)}}\nonumber\\ &<&\left[\frac{(j-\mq)(s-q)}{(q-\lz)(s-2j-\ve)}+\frac{(s-2j)(1-\eta)}{2j+\ve-\lz}\right]f(2j+\ve).\nonumber
\end{eqnarray}
Obviously, \eqref{ja2} holds for $r=s$. As a result, we can apply the preceding inequality to it. Upon doing so, we arrive at
\begin{eqnarray}
	\|w\|_{\infty,Q_T}&\leq& 16\|w(\cdot,0)\|_{\infty,\rn}\nonumber\\
	&&+ c\|w\|_{\lz,Q_T}^{s_1}\|w\|_{2j+\ve,Q_T}^{-\frac{(j-\mq)(2j+\ve)(s-q)}{(q-\lz)(s-2j)(s-2j-\ve)}+\left[\frac{(j-\mq)(s-q)}{(q-\lz)(s-2j-\ve)}+\frac{(s-2j)(1-\eta)}{2j+\ve-\lz}\right]\frac{2j+\ve}{s-2j}}\nonumber\\
	&=& 16\|w(\cdot,0)\|_{\infty,\rn}+ c\|w\|_{\lz,Q_T}^{s_1}\|w\|_{2j+\ve,Q_T}^{\frac{(2j+\ve)(1-\eta)}{2j+\ve-\lz}}.\label{est11}
\end{eqnarray}
As we noted earlier, $c$ and $s_1$ here remain bounded for $s\in (2j+\ve, r]$.
In view of \eqref{rq}, \eqref{qcon} and \eqref{lz}, we can form the interpolation inequality
\begin{equation}
	\|w\|_{2j+\ve,Q_T}\leq \|w\|_{\infty,Q_T}^{\frac{2j+\ve-\lz}{2j+\ve}}\|w\|_{\lz,Q_T}^{\frac{\lz}{2j+\ve}}.\nonumber
\end{equation}
Collect this  in \eqref{est11} to get
\begin{eqnarray}
	\|w\|_{\infty,Q_T}&\leq& 16\|w(\cdot,0)\|_{\infty,\rn}+c\|w\|_{\lz,Q_T}^{s_1+\frac{\lz(1-\eta)}{2j+\ve-\lz}}\|w\|_{\infty,Q_T}^{1-\eta}.\nonumber
\end{eqnarray}
Then \eqref{es} follows from a suitable application of Young's inequality \eqref{you}. That is to say, if \eqref{pp1} fails to be true, then \eqref{es} holds.

We next show that \eqref{es} remains valid under \eqref{pp1}. On account of \eqref{pp},  the convexity of $f(s)$, there holds
\begin{equation}\label{ja4}
	f^\prime(s)\geq	\frac{f(s)-f(2j+\ve)}{s-2j-\ve}\ \ \mbox{for each $s>2j+\ve$}.
\end{equation}
We may decompose
\begin{equation}
	\frac{(j-\mq)(s-q)}{(q-\lz)(s-2j-\ve)}=\frac{(j-\mq)}{(q-\lz)}+\frac{(j-\mq)(2j+\ve-q)}{(q-\lz)(s-2j-\ve)}.\nonumber
\end{equation}
Incorporate the preceding two results into \eqref{pp1} to derive
	\begin{eqnarray}
-s_jf^\prime(s)+f(s) &\geq&\left[\frac{(j-\mq)}{(q-\lz)}+\frac{(s-2j)(1-\eta)}{2j+\ve-\lz}\right]f(2j+\ve) \ \ \mbox{for each $s\in (2j+\ve, r]$.}\label{ja3}
\end{eqnarray}
where
\begin{equation}\label{sj}
	s_j= j-	\frac{(2j+\ve-q)(j-\mq)}{(q-\lz)}.
\end{equation}
We further require 
\begin{equation}\label{ja8}
	s_j>0.
\end{equation}
For the  inequality to hold, it is enough for us to take
\begin{equation}\label{eu}
	\ve<\frac{q-\lz}{j-\mq}\left[j-\frac{(2j-q)(j-\mq)}{q-\lz}\right].
\end{equation}
This is possible only when
\begin{equation}\label{ja7}
	\frac{(2j-q)(j-\mq)}{q-\lz}-	j=\frac{2j^2-(2q+2\mq-\lz)j+\mq q}{q-\lz}<0.
\end{equation}
The numerator is a quadratic function in $j$. The discriminant is given by
\begin{eqnarray}
	(2q+2\mq-\lz)^2-8\mq q&=&4q^2+4\mq^2-4\lz(q+\mq)+\lz^2\nonumber\\
	&=&(2q-\lz)^2+4\mq(\mq-\lz)>0.\nonumber
\end{eqnarray} 
The last step here is due to \eqref{ml}. According to the quadratic formula, the solution set to \eqref{ja7} is given by
\begin{equation}
	j\in (j_1, j_2),\nonumber
\end{equation}
where
\begin{eqnarray}
	j_1&=&\frac{2q+2\mq-\lz-\sqrt{(2q+2\mq-\lz)^2-8\mq q}}{4},\nonumber\\
	j_1&=&\frac{2q+2\mq-\lz+\sqrt{(2q+2\mq-\lz)^2-8\mq q}}{4}.\nonumber
\end{eqnarray}
Obviously, both $j_1$ and $j_2$ are positive. In particular,
\begin{equation}\label{j2}
	j_2\in (q, q+\mq-\lz).
\end{equation}
Under \eqref{ja8}, we can write \eqref{ja3} in the form
	\begin{eqnarray}
f^\prime(s)-\frac{f(s)}{s_j} &\leq&-\left[\frac{(j-\mq)}{(q-\lz)}+\frac{(s-2j)(1-\eta)}{2j+\ve-\lz}\right]\frac{f(2j+\ve)}{s_j} \ \ \mbox{for each $s\in (2j+\ve, r]$.}\nonumber
\end{eqnarray}
Multiply through the inequality by $e^{-\frac{s}{s_j}}$ to derive 
\begin{equation}
	\left(e^{-\frac{s}{s_j}}f(s)\right)^\prime\leq -\left[\frac{(j-\mq)}{(q-\lz)}+\frac{(s-2j)(1-\eta)}{2j+\ve-\lz}\right]\frac{f(2j+\ve)e^{-\frac{s}{s_j}}}{s_j} \ \ \mbox{for each $s\in (2j+\ve, r]$.}\nonumber
\end{equation}
Subsequently,
\begin{eqnarray}
e^{-\frac{r}{s_j}}	f(r)&=&e^{-\frac{2j+\ve}{s_j}}f(2j+\ve)+\int_{2j+\ve}^{r}	\left(e^{-\frac{s}{s_j}}f(s)\right)^\prime ds\nonumber\\
	&\leq&e^{-\frac{2j+\ve}{s_j}}f(2j+\ve)+\frac{(j-\mq)\left(e^{-\frac{r}{s_j}}-e^{-\frac{2j+\ve}{s_j}}\right)f(2j+\ve)}{(q-\lz)}\nonumber\\
	&&+\frac{(1-\eta)f(2j+\ve)}{2j+\ve-\lz}\left[(r-2j)e^{-\frac{r}{s_j}}-\ve e^{-\frac{2j+\ve}{s_j}}+s_j\left(e^{-\frac{r}{s_j}}-e^{-\frac{2j+\ve}{s_j}}\right)\right]\nonumber\\
	&=&\left[1-\frac{j-\mq}{q-\lz}-\frac{(1-\eta)(\ve+s_j)}{2j+\ve-\lz}\right]e^{-\frac{2j+\ve}{s_j}}f(2j+\ve)\nonumber\\
	&&+\left[\frac{j-\mq}{q-\lz}+\frac{(1-\eta)(r-2j+s_j)}{2j+\ve-\lz}\right]e^{-\frac{r}{s_j}}f(2j+\ve).\label{h1}
\end{eqnarray}
Recall \eqref{ja4} and \eqref{sj} to obtain
\begin{eqnarray}
	\lefteqn{-j\fp+\fr+\frac{(2j+\ve-q)(j-\mq)\fr}{(q-\lz)(r-2j-\ve)}}\nonumber\\
	&\leq&-\frac{j(f(r)-f(2j+\ve))}{r-2j-\ve}+\fr+\frac{(2j+\ve-q)(j-\mq)f(r)}{(q-\lz)(r-2j-\ve)}\nonumber\\
	&=&\frac{(r-2j-\ve-s_j)\fr}{r-2j-\ve}+\frac{jf(2j+\ve)}{r-2j-\ve}.\label{h2}
\end{eqnarray}
We further require
\begin{equation}\label{rl}
	r>2j+\ve+s_j.
\end{equation}
Under this choice for $r$, we can combine \eqref{h1} and \eqref{h2} to deduce
\begin{eqnarray}
	\lefteqn{-j\fp+\fr+\frac{(2j+\ve-q)(j-\mq)\fr}{(q-\lz)(r-2j-\ve)}}\nonumber\\
	&\leq&\frac{(r-2j-\ve-s_j)}{r-2j-\ve}\left[1-\frac{j-\mq}{q-\lz}-\frac{(1-\eta)(\ve+s_j)}{2j+\ve-\lz}\right]e^{\frac{r-2j-\ve}{s_j}}f(2j+\ve)\nonumber\\
	&&+\frac{(r-2j-\ve-s_j)}{r-2j-\ve}\left[\frac{j-\mq}{q-\lz}+\frac{(1-\eta)(r-2j+s_j)}{2j+\ve-\lz}\right]f(2j+\ve)+\frac{jf(2j+\ve)}{r-2j-\ve}.\nonumber
\end{eqnarray}
Utilizing this in \eqref{ja2}, we arrive at
\begin{eqnarray}
	\|w\|_{\infty,Q_T}&\leq&	16\|w(\cdot,0)\|_{\infty,\rn}+ c\|w\|_{\lz,Q_T}^{s_1}\|w\|_{2j+\ve,Q_T}^{\beta_2},\label{est12}
\end{eqnarray}
where
\begin{eqnarray}
	\beta_2&=&-\frac{(j-\mq)(2j+\ve)(r-q)}{(q-\lz)(r-2j)(r-2j-\ve)}+\frac{j(2j+\ve)}{(r-2j-\ve)(r-2j)}\nonumber\\
	&&+\frac{(r-2j-\ve-s_j)e^{\frac{r-2j-\ve}{s_j}}(2j+\ve)}{(r-2j-\ve)(r-2j)}\left[1-\frac{j-\mq}{q-\lz}-\frac{(1-\eta)(\ve+s_j)}{2j+\ve-\lz}\right]\nonumber\\
	&&+\frac{(r-2j-\ve-s_j)(2j+\ve)}{(r-2j-\ve)(r-2j)}\left[\frac{j-\mq}{q-\lz}+\frac{(1-\eta)(r-2j+s_j)}{2j+\ve-\lz}\right].\label{b2}
\end{eqnarray}
We must have
\begin{equation}\label{dh6}
	\beta_2<\frac{2j+\ve}{2j+\ve-\lz}.
\end{equation}
Plug \eqref{b2} into this and simplify the resulting inequality to obtain
\begin{eqnarray}
	\lefteqn{-\frac{(j-\mq)(r-\lz)}{(q-\lz)}+r-\mq}\nonumber\\
	&&+(r-2j-\ve-s_j)e^{\frac{r-2j-\ve}{s_j}}\left[1-\frac{j-\mq}{q-\lz}-\frac{(1-\eta)(\ve+s_j)}{2j+\ve-\lz}\right]\nonumber\\
	&&+(r-2j-\ve-s_j)\left[\frac{j-\mq}{q-\lz}+\frac{(1-\eta)(r-2j+s_j)}{2j+\ve-\lz}\right]<\frac{(r-2j)(r-\lz)}{2j+\ve-\lz},\nonumber
\end{eqnarray}
from whence it follows
\begin{eqnarray}
\lefteqn{\left[(\ve+s_j)e^{\frac{r-2j-\ve}{s_j}}-(r-2j+s_j)\right]\frac{(r-2j-\ve-s_j)\eta}{2j+\ve-\lz}}	\nonumber\\
&<&\frac{(r-2j)(r-\lz)}{2j+\ve-\lz}+\frac{(j-\mq)(r-\lz)}{(q-\lz)}-r+\mq\nonumber\\
&&+(r-2j-\ve-s_j)\left[e^{\frac{r-2j-\ve}{s_j}}\left[\frac{j-\mq}{q-\lz}+\frac{(\ve+s_j)}{2j+\ve-\lz}-1\right]-\frac{j-\mq}{q-\lz}-\frac{(r-2j+s_j)}{2j+\ve-\lz}\right]\nonumber\\
&\equiv& h(r).\label{dh3}
\end{eqnarray}
We easily ckeck that
\begin{equation}
	\frac{(\ve+s_j)e^{\frac{r-2j-\ve}{s_j}}}{2j+\ve-\lz}-\frac{(r-2j+s_j)}{2j+\ve-\lz}>0\ \ \mbox{for $r>2j+\ve$.}\nonumber
\end{equation}
Indeed, there hold
\begin{eqnarray}
	\left.(\ve+s_j)e^{\frac{r-2j-\ve}{s_j}}-(r-2j+s_j)	\right|_{r=2j+\ve}&=&0,\nonumber\\
	\left[(\ve+s_j)e^{\frac{r-2j-\ve}{s_j}}-(r-2j+s_j)\right]^\prime&>&0\ \ \mbox{for $r>2j+\ve$.}\nonumber
\end{eqnarray}
Hence, we must show that there exists a $r>2j+\ve+s_j$ such that
\begin{eqnarray}
h(r)>0 .\label{dh4}
\end{eqnarray}
Note from \eqref{sj} that
\begin{eqnarray}
	\frac{r-2j+s_j}{2j+\ve-\lz}&=&\frac{r-j-\frac{(2j+\ve-q)(j-\mq)}{q-\lz}}{2j+\ve-\lz}\nonumber\\
	&=&\frac{r-\mq-\frac{(2j+\ve-\lz)(j-\mq)}{q-\lz}}{2j+\ve-\lz}\nonumber\\
	&=&\frac{r-\mq}{2j+\ve-\lz}-\frac{(j-\mq)}{q-\lz}.\nonumber
\end{eqnarray}
By the same token,
\begin{eqnarray}
	\frac{2j-s_j-\mq}{2j+\ve-\lz}&=&\frac{j+\frac{(2j+\ve-q)(j-\mq)}{q-\lz}-\mq}{2j+\ve-\lz}\nonumber\\
	&=&\frac{(j-\mq)}{q-\lz},\nonumber\\
	\frac{s_j+\ve}{2j+\ve-\lz}&=&\frac{j-\frac{(2j+\ve-q)(j-\mq)}{q-\lz}+\ve}{2j+\ve-\lz}\nonumber\\
	&=&\frac{2j-\mq-\frac{(2j+\ve-\lz)(j-\mq)}{q-\lz}+\ve}{2j+\ve-\lz}\nonumber\\
	&=&\frac{2j-\mq+\ve}{2j+\ve-\lz}-\frac{j-\mq}{q-\lz}.\nonumber
\end{eqnarray}
Incorporate the preceding three equations into the expression for $h(r)$ in \eqref{dh3} to derive
\begin{eqnarray}
h(r)&=&\frac{(r-2j)(r-\lz)}{2j+\ve-\lz}+\frac{(j-\mq)(r-\lz)}{(q-\lz)}-r+\mq\nonumber\\
&&+(r-2j-\ve-s_j)\left[e^{\frac{r-2j-\ve}{s_j}}\left[\frac{2j-\mq+\ve}{2j+\ve-\lz}-1\right]-\frac{r-\mq}{2j+\ve-\lz}\right]\nonumber\\
&=&\frac{(r-2j)(r-\lz)}{2j+\ve-\lz}+\frac{(j-\mq)(r-\lz)}{(q-\lz)}-\frac{(r-\mq)(r-s_j-\lz)}{2j+\ve-\lz}\nonumber\\
&&-\frac{(r-2j-\ve-s_j)(\mq-\lz)e^{\frac{r-2j-\ve}{s_j}}}{2j+\ve-\lz}\nonumber\\
&=&\frac{-(2j-s_j-\mq)r+\lz(2j-\mq)}{2j+\ve-\lz}+\frac{(j-\mq)(r-\lz)}{(q-\lz)}\nonumber\\
	&&-\frac{(r-2j-\ve-s_j)(\mq-\lz)e^{\frac{r-2j-\ve}{s_j}}}{2j+\ve-\lz}\nonumber\\
	&=&\frac{\lz(2j-\mq)}{2j+\ve-\lz}-\frac{\lz(j-\mq)}{(q-\lz)}-\frac{(r-2j-\ve-s_j)(\mq-\lz)e^{\frac{r-2j-\ve}{s_j}}}{2j+\ve-\lz}.\nonumber
\end{eqnarray}
In view of \eqref{ml}, for \eqref{dh4} to hold, we must have
\begin{equation}
	\frac{(2j-\mq)}{2j+\ve-\lz}-\frac{(j-\mq)}{(q-\lz)}>0.\nonumber
\end{equation}
Solve this for $\ve$ to deduce
\begin{eqnarray}
	\ve&<&\frac{q-\lz}{j-\mq}\left[2j-\mq-\frac{(j-\mq)(2j-\lz)}{q-\lz}\right]\nonumber\\
	&=&\frac{q-\lz}{j-\mq}\left[j-\frac{(j-\mq)(2j-q)}{q-\lz}\right],\nonumber
\end{eqnarray}
which is exactly our assumption \eqref{eu}.

In summary, the order in which we choose our parameters is as follows: Let $q$ be given as in \eqref{qcon2}. By virtue of  \eqref{j2}, we may take
\begin{equation}
	j\in\left(\max\left\{\frac{q}{2},j_1\right\}, j_2\right).\nonumber
\end{equation}
This implies \eqref{jl} and  enables us to select $\ve$ as in \eqref{eu},  which, in turn,  guarantees \eqref{dh4} for  $r$ close to $2j+\ve+s_j$ from the right-hand side. That is, we have both \eqref{rl} and \eqref{dh4}. Equipped with this, we can choose $\eta\in(0,1)$ so that \eqref{dh3} is satisfied.

Without any loss of generality, we may assume
\begin{equation}\label{h4}
	\|w\|_{2j+\ve,Q_T}>1. 
\end{equation}
Otherwise, \eqref{ha7} would be enough to imply our theorem. Under \eqref{h4}, we may suppose that the last exponent $\beta_2$ in \eqref{est12} is positive.
Were this not true, \eqref{h4} combined with \eqref{est12} would yield our theorem. 
In view of \eqref{qcon} and \eqref{lz}, we can form the interpolation inequality
\begin{equation}
	\|w\|_{2j+\ve,Q_T}\leq \|w\|_{\infty,Q_T}^{\frac{2j+\ve-\lz}{2j+\ve}}\|w\|_{\lz,Q_T}^{\frac{\lz}{2j+\ve}}.\nonumber
\end{equation}
Collect this  in \eqref{est12} and keep in mind that $\beta_2>0$ to get
\begin{eqnarray}
	\|w\|_{\infty,Q_T}&\leq& 16\|w(\cdot,0)\|_{\infty,\rn}+c\|w\|_{\lz,Q_T}^{s_1+\frac{\lz\beta_2}{2j+\ve}}\|w\|_{\infty,Q_T}^{\frac{(2j+\ve-\lz)\beta_2}{2j+\ve}},\nonumber
\end{eqnarray}
According to \eqref{dh6}, we have
\begin{equation}
	\frac{(2j+\ve-\lz)\beta_2}{2j+\ve}\in (0,1).\nonumber
\end{equation}
Then \eqref{es} follows from a suitable application of Young's inequality \eqref{you}.
The proof of Theorem \ref{thm} is completed.
\end{proof}


\end{document}